\newtheorem{definition}{Definition}
\newtheorem{theorem}{Theorem}
\newtheorem{lemma}{Lemma}
\renewcommand{\maketag@@@}[1]{\hbox{\m@th\normalsize\normalfont#1}}%
\journal{Chaos, Solitons and Fractals}
\begin{document}
\captionsetup[figure]{labelfont={bf},labelformat={default},labelsep=period,name={Fig.}}
\begin{frontmatter}

\title{An explicit Hopf bifurcation criterion of fractional-order systems with order $1<\alpha <2$}

\author{Jing Yang}
\author{Xiaoxue Li$^*$,Xiaorong Hou$^*$}
\address{University of Electronic Science and Technology of China, School of Automation Engineering, Chengdu 611731,
China}
\cortext[mycorrespondingauthor]{Corresponding author}
\ead{lixx@uestc.edu.cn,houxr@uestc.edu.cn}

%

%

\begin{abstract}
A Hopf bifurcation criterion of fractional-order systems with order $1<\alpha <2$ is established in this paper, in which all conditions are explicitly expressed by parameters without solving the roots of the relevant characteristic polynomial of Hopf bifurcation conditions. It avoids the problem that existing methods may fail due to the computational complexity  in the multi-parameter situation. The bifurcation hyper-surface of multi-parameter can be obtained directly.
\end{abstract}

\begin{keyword}
Hopf bifurcation\sep Fractional-order system\sep Multi-parameter\sep Generalized Routh-Hurwitz criterion
\end{keyword}

\end{frontmatter}

\linenumbers

\section{Introduction}
Hopf bifurcation of fractional-order dynamical systems was discussed numerically \cite{el2009stability,li2014hopf,vcermak2019stability,deshpande2017hopf}.
These existing methods are valid to obtain the results of Hopf bifurcation with the idea of analyzing eigenvalues of the Jacobian matrix of low-dimensional systems.
To ensure the feasibility of methods in high-dimensional situation, some researchers simplified multi-parameter to a single parameter. For example, in the problem of Hopf bifurcation about fractional-order neural network systems, several neuron parameters are set to a same bifurcation parameter or the sum of some parameters is set to a single bifurcation parameter \cite{huang2017dynamical,2018Effects}, although the fact is that different neuron parameters have different values \cite{wei2004bifurcation,yan2006hopf}. For high-dimensional fractional-order systems with multi-parameter, an online method \cite{YANG2022111714} is suitable to determine the Hopf bifurcation hyper-surface through a visual representation of the parameter space if the systems have fewer parameters. However, the online method may fail because of the computational complexity of multi-parameter.
\par
In this paper,
we establish a criterion for analyzing the Hopf bifurcation of fractional-order systems, where the expressions are all explicit ones of multiple parameters. This criterion can give results directly, which avoids the problem that the existing methods may fail due to the computational complexity in the multi-parameter situation. The Caputo fractional derivative is employed in this paper \cite{1999Fractional}.
\par
Consider the fractional-order nonlinear system:
\begin{equation}\label{eq1}
\frac{d^{\alpha }x}{dt^{\alpha }}=g(x,\mu ),
\end{equation}
where $\alpha \in \left ( 1,2 \right )$ is the fractional order, $x=(x_1,x_2,\cdots,x_n)^T$ is the state vector, $\mu=(\mu_1,\mu_2,\cdots,\mu_n)$, $\mu_i$ are bifurcation parameters, $g(x)=\left (g_1(x),\cdots,g_n(x)\right )^T$, $g_i(\cdot)$ are nonlinear functions.
\par
Initial conditions for system (\ref{eq1}) are $x_i(0)=x_{i0}, {x}'_i(0)={x}'_{i0}$.
\par
Suppose that $x^{\ast }=(x_1^{\ast },x_2^{\ast },\cdots,x_n^{\ast })^T$ is an equilibrium point of system (\ref{eq1}). We denote the Jacobian matrix of system (\ref{eq1}) at $x^{\ast }$ by $J(\mu)$. Let the characteristic polynomial of $J(\mu)$ be
\begin{equation}\label{eq4}
f(\lambda;\alpha,\mu)=\lambda^n+a_1\lambda^{n-1}+\cdots+a_n,
\end{equation}
where $a_i=a_i(\alpha,\mu),i=1,2,\cdots,n$.
\par
From the locally asymptotical stability theorem of fractional-order nonlinear systems \cite{ahmed2007equilibrium}, for $\alpha \in \left ( 1,2 \right )$,
we denote $\Omega =\left \{ z\in \mathbb{C}\mid \left | arg\left ( z \right ) \right | >\frac{\alpha \pi }{2}\right \}$, $\Sigma =\left \{ z\in \mathbb{C}\mid \left | arg\left ( z \right ) \right | <\frac{\alpha \pi }{2}\right \}$ and $\Gamma =\left \{ z\in \mathbb{C}\mid \left | arg\left ( z \right ) \right | =\frac{\alpha \pi }{2}\right \}$
the stable region, the unstable region and the critical line of the complex plane, respectively.
\par
According to the fractional-order Hopf bifurcation conditions \cite{deshpande2017hopf,ma2016complexity}, system (\ref{eq1}) occurs Hopf bifurcation if $f(\lambda;\alpha,\mu)$ has a pair of conjugate complex roots, which cross the critical line $\Gamma$ at the critical roots from the stable region $\Omega$ to the unstable region $\Sigma$, as the parameter $\mu$ changes, and the other roots are all in $\Omega$.

\section{Main Results}
$f(\lambda;\alpha,\mu)$ in (\ref{eq4}) can be expressed as $g(r)=f\left (r\cdot  e^{i\frac{\alpha -1}{2}\pi };\alpha,\mu \right )$ through counterclockwise turning the coordinate system with angle $\theta =\frac{\left (\alpha -1  \right )\pi }{2}$. A generalized Routh-Hurwitz matrix is constructed from $g(ir)=f\left ( r\cdot  e^{i\frac{\alpha\pi}{2} };\alpha,\mu \right )$. Suppose that the imaginary part $f_1(r;\alpha,\mu)$ and the real part $f_2(r;\alpha,\mu)$ of $g(ir)=f\left ( r\cdot  e^{i\frac{\alpha\pi}{2} };\alpha,\mu \right )$ are expressed as
\begin{equation}\label{eq11}
\left\{\begin{matrix}
f_1(r;\alpha,\mu)=\overline{a}_0r^n+\overline{a}_1r^{n-1}+\cdots+\overline{a}_{n-1}r+\overline{a}_n\\
f_2(r;\alpha,\mu)=\overline{b}_0r^n+\overline{b}_1r^{n-1}+\cdots+\overline{a}_{n-1}r+\overline{b}_n,
\end{matrix}\right.
\end{equation}
where $\overline{a}_j=a_j\cdot sin\left ( \frac{(n-j)\cdot \alpha \pi }{2}\right )$, $\overline{b}_j=a_j\cdot cos\left ( \frac{(n-j)\cdot \alpha \pi }{2}\right )$, $j=0,1,\cdots,n$.
\begin{definition}
For $f(\lambda;\alpha,\mu)$ in (\ref{eq4}), the $2n\times2n$ fractional-order Routh-Hurwitz matrix $H_{\alpha}$ of $f(\lambda;\alpha,\mu)$ is defined as:
\begin{small}
\begin{equation}\label{eq5}
H_{\alpha}(\mu)=\left [ \begin{matrix}
\overline{a}_0  &\overline{a}_1  & \cdots  & 0 & \cdots  &\cdots & 0\\
\overline{b}_0  &\overline{b}_1 & \cdots  & \overline{b}_n & \cdots  &\cdots & 0\\
0 & \overline{a}_0 & \cdots &\overline{a}_{n-1} &0 &\cdots &0\\
0 & \overline{b}_0 &\cdots & \overline{b}_{n-1}&\overline{b}_{n}&\cdots &0\\
\vdots &\vdots &\vdots & \vdots &\vdots&\vdots &\vdots\\
0 &\cdots &\cdots & \overline{a}_0 &\cdots&\overline{a}_{n-1}&0\\
0 &\cdots &\cdots & \overline{b}_0 &\cdots&\overline{b}_{n-1} &\overline{b}_n\\
\end{matrix} \right ].
\end{equation}
\end{small}
\end{definition}
Denote the $2p$-th order leading principle minor of $H_\alpha(\mu)$ by $\nabla_p(\mu)(p=1,2,\cdots,n)$.
\begin{theorem}\label{th1}
System (\ref{eq1}) occurs Hopf bifurcation if exists $\mu^*$ such that
\\
1.(eigenvalues condition) $\nabla_{n}(\mu^*)=0, \nabla_{p}(\mu^*)>0(p=1,2,\cdots,n-1), \widetilde{\nabla}(\mu^*)<0$, where $\widetilde{\nabla}(\mu)$ denotes the determinant of the $2\left (n-1  \right )\times2\left (n-1  \right )$ matrix extracted from the first $2n-2$ rows, the first $2n-3$ columns and the $2n-1$th columns of $H_{\alpha}(\mu)$, that is
\begin{small}
\begin{equation}
\widetilde{\nabla}(\mu)=\left | \begin{matrix}
\overline{a}_0  &\overline{a}_1  & \cdots  & 0 & \cdots  &\cdots &0\\
\overline{b}_0  &\overline{b}_1  & \cdots  & \overline{b}_n & \cdots  &\cdots &0\\
\vdots &\vdots &\vdots & \vdots &\vdots&\vdots &\vdots\\
0 &\cdots & \overline{a}_0&\cdots &\cdots & \overline{a}_{n-1} &0  \\
0 &\cdots & \overline{b}_0 &\cdots &\cdots & \overline{b}_{n-1}&0  \\
0 &\cdots &\cdots & \overline{a}_0  &\cdots& \overline{a}_{n-2}&0 \\
0 &\cdots &\cdots & \overline{b}_0  &\cdots& \overline{b}_{n-2}&\overline{b}_n \\
\end{matrix} \right |.
\end{equation}
\end{small}
\\
2.(transversality condition)
$\nabla_n(\mu)$ is indefinite in any neighborhood $\delta(\mu^*)$ of $\mu^*$, that is, $\forall \delta (\mu^*)$, $\exists \,\mu_1, \mu_2  \in \delta(\mu^*)$, s.t. $\nabla(\mu_1)>0,\nabla(\mu_2)<0$.
\end{theorem}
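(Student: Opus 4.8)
\section*{Proof proposal}

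The plan is to translate the sector-stability requirement into a root-counting statement for the two real polynomials $f_1(r;\alpha,\mu)$ and $f_2(r;\alpha,\mu)$ in (\ref{eq11}), and then to read off the location of the eigenvalues of $J(\mu)$ from the leading principal minors of $H_\alpha(\mu)$. The rotation $\lambda = r\,e^{i(\alpha-1)\pi/2}$ carries the upper critical ray $\arg\lambda=\tfrac{\alpha\pi}{2}$ onto the positive imaginary axis, so that $g(ir)=f_2(r)+i\,f_1(r)$ vanishes at some real $r^\ast$ exactly when $f(\lambda;\alpha,\mu)$ possesses a root on $\Gamma$; because $f$ has real coefficients its roots occur in conjugate pairs, and a root on the upper ray is always mirrored on the lower ray. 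With this dictionary the theorem splits into two claims: (A) at $\mu^\ast$ the polynomial $f(\cdot;\alpha,\mu^\ast)$ has exactly one conjugate pair on $\Gamma$ and all remaining roots in $\Omega$; and (B) as $\mu$ passes through $\mu^\ast$, this pair moves from $\Omega$ into $\Sigma$.

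For claim (A) I would first invoke the generalized Routh--Hurwitz criterion associated with $H_\alpha(\mu)$: all roots of $f$ lie in the stable sector $\Omega$ if and only if $\nabla_p(\mu)>0$ for every $p=1,\dots,n$. The vanishing of the top minor $\nabla_n(\mu^\ast)=0$ is a resultant-type condition on $f_1$ and $f_2$ and signals that they share a common real root, i.e.\ that $g$ has a purely imaginary root and $f$ therefore has a root on $\Gamma$. The strict inequalities $\nabla_p(\mu^\ast)>0$ for $p<n$ guarantee that no other eigenvalue has yet left $\Omega$. The auxiliary determinant $\widetilde\nabla(\mu^\ast)<0$ is what excludes the degenerate configurations still compatible with $\nabla_n=0$ --- a non-simple root on $\Gamma$, a root at the origin ($r^\ast=0$), or more than one crossing pair --- thereby forcing the boundary root to be a single simple conjugate pair. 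I would establish this by expanding $\det H_\alpha$ along its last column to isolate the common-root structure and by showing that the sign of $\widetilde\nabla$ records whether that pair is simple and isolated.

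For claim (B), the transversality hypothesis states that $\nabla_n$ assumes both signs in every neighborhood of $\mu^\ast$. Since $\nabla_n>0$ characterizes the configuration with all roots in $\Omega$, while $\nabla_n<0$ corresponds to a configuration with a conjugate pair in the unstable sector $\Sigma$, a sign change across $\mu^\ast$ forces the count of eigenvalues in $\Sigma$ to increase by this pair. Combined with claim (A), which fixes the crossing at $\mu^\ast$ to be a single simple pair on $\Gamma$, this shows that the pair genuinely migrates from $\Omega$ to $\Sigma$ as $\mu$ varies through $\mu^\ast$, which is precisely the fractional Hopf condition recalled in the Introduction.

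The main obstacle I anticipate lies in the rigorous proof of claim (A): showing that $\nabla_n(\mu^\ast)=0$ together with $\nabla_p(\mu^\ast)>0$ ($p<n$) and $\widetilde\nabla(\mu^\ast)<0$ selects precisely one simple conjugate pair on $\Gamma$ --- neither a higher-multiplicity root nor two simultaneous pairs --- and in tying the sign of $\widetilde\nabla$ to this configuration. This demands a careful determinantal analysis of $H_\alpha$, for instance through its Sylvester/Bezout interpretation as the resultant of $f_1$ and $f_2$, handled together with the conjugate symmetry that links the upper and lower critical rays (and with the special values of $\alpha$ for which the leading coefficients $\overline a_0,\overline b_0$ degenerate). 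By contrast, once the sign of $\nabla_n$ is identified with the number of roots in $\Sigma$, the transversality step (B) is comparatively routine.
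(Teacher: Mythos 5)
Your overall architecture matches the paper's: a generalized Routh--Hurwitz argument on the rotated polynomial for the eigenvalue-location condition, plus a continuity/sign-change argument for transversality (the paper's Lemma~\ref{le2} is essentially your claim (B)). The gap is exactly where you locate it, in claim (A), and it is genuine because you have misidentified what $\widetilde{\nabla}(\mu^*)<0$ is for. Your dictionary ``$g(ir)$ vanishes at some real $r^*$ exactly when $f$ has a root on $\Gamma$'' fails for $r^*<0$: since $\alpha\in(1,2)$, the point $r^*e^{i\alpha\pi/2}$ with $r^*<0$ has argument $\alpha\pi/2-\pi$, whose modulus $\pi-\alpha\pi/2$ is strictly smaller than $\alpha\pi/2$, so this root lies strictly inside the unstable sector $\Sigma$, not on $\Gamma$. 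Consequently $\nabla_n(\mu^*)=0$ together with $\nabla_p(\mu^*)>0$ for $p<n$ is still compatible with a configuration having no root on $\Gamma$ at all, and the sole purpose of $\widetilde{\nabla}(\mu^*)<0$ is to exclude that case --- not, as you suggest, to rule out non-simple boundary roots or multiple crossing pairs (simplicity of the common root is already encoded in $\nabla_{n-1}(\mu^*)\neq 0$ through the resultant structure).

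The paper closes this step with a concrete computation that your proposal does not supply: write $f_1=\widetilde f_1\cdot(r-r_0)$ and $f_2=\widetilde f_2\cdot(r-r_0)$ for the common real root $r_0$, relate $H_\alpha$ to the resultant matrix of $\widetilde f_1,\widetilde f_2$ by elementary column operations (which gives $\nabla_p(\mu^*)=\widetilde{\nabla}_p(\mu^*)$ for $p\le n-1$ and lets the generalized Routh--Hurwitz criterion applied to $\widetilde f_1,\widetilde f_2$ account for the remaining roots lying in $\Omega$), and then invoke the subresultant theorem to obtain the identity $\nabla_{n-1}(\mu^*)\,r_0+\widetilde{\nabla}(\mu^*)=0$. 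Hence $\widetilde{\nabla}(\mu^*)=-\nabla_{n-1}(\mu^*)\,r_0$, and since $\nabla_{n-1}(\mu^*)>0$, the condition $\widetilde{\nabla}(\mu^*)<0$ is precisely $r_0>0$, i.e.\ the common root really sits on the critical ray. Your proposed alternative of expanding $\det H_\alpha$ along its last column does not by itself produce this identity; without it the sign condition on $\widetilde{\nabla}$ remains unexplained and claim (A) is not established.
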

\begin{theorem}\label{th2}
Denoting the pair of conjugate complex roots on the critical line $\Gamma$ by $\lambda(\mu^*),\overline{\lambda}(\mu^*)$, we have $\lambda(\mu^*),\overline{\lambda}(\mu^*)=-\frac{\widetilde{\nabla}(\mu^*)}{\nabla_{n-1}(\mu^*)}\cdot e^{\pm i\frac{\alpha \pi }{2}}$.
\end{theorem}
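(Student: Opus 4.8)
The plan is to reduce Theorem~\ref{th2} to a purely algebraic identity for the common root of the two real polynomials $f_1(r;\alpha,\mu^*)$ and $f_2(r;\alpha,\mu^*)$, and then to read that root off the matrix $H_\alpha(\mu^*)$ by Cramer's rule. First I would record what the critical configuration means. At $\mu^*$ the polynomial $f$ has a conjugate pair on $\Gamma$, i.e. roots of the form $\lambda=r^*e^{\pm i\alpha\pi/2}$ with $r^*>0$. Since $g(ir)=f(r\,e^{i\alpha\pi/2};\alpha,\mu^*)$ and, separating real and imaginary parts, $g(ir)=f_2(r;\alpha,\mu^*)+i\,f_1(r;\alpha,\mu^*)$, the point $\lambda=r^*e^{i\alpha\pi/2}$ is a root of $f$ exactly when $r^*$ is a common root of $f_1$ and $f_2$. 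Thus it suffices to prove $r^*=-\widetilde{\nabla}(\mu^*)/\nabla_{n-1}(\mu^*)$.

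The key observation is the interpretation of the rows of $H_\alpha$. Expressing coefficient vectors in the monomial basis $\{r^{2n-1},r^{2n-2},\dots,r,1\}$, row $2k-1$ is the coefficient vector of $r^{\,n-k}f_1(r)$ and row $2k$ that of $r^{\,n-k}f_2(r)$ for $k=1,\dots,n$; hence $H_\alpha$ is precisely the (interleaved) Sylvester matrix of $f_1$ and $f_2$. Because every polynomial $r^{\,n-k}f_1$ and $r^{\,n-k}f_2$ vanishes at $r^*$, the power vector $w=((r^*)^{2n-1},(r^*)^{2n-2},\dots,r^*,1)^{T}$ satisfies $H_\alpha(\mu^*)\,w=0$; this is the algebraic content of the hypothesis $\nabla_n(\mu^*)=\det H_\alpha(\mu^*)=0$.

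Next I would restrict to the top $2n-2$ rows $N$ of $H_\alpha$, which carry the polynomials $r^{\,n-k}f_1,\ r^{\,n-k}f_2$ for $k=1,\dots,n-1$. None of these has a constant term, so the last column (column $2n$) of $N$ vanishes identically and $Nw=N'w'=0$, where $N'$ is the first $2n-1$ columns of $N$ and $w'=((r^*)^{2n-1},\dots,(r^*)^2,r^*)^{T}$. The hypothesis $\nabla_{n-1}(\mu^*)>0$ forces $N'$ (of size $(2n-2)\times(2n-1)$) to have full rank $2n-2$, so its kernel is one-dimensional and spanned by the Cramer vector $\kappa$ with entries $\kappa_c=(-1)^{c}\det(N'_{\widehat{c}})$, where $N'_{\widehat{c}}$ deletes column $c$. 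Since $w'$ also lies in this line, the scale-invariant ratio of its last two entries gives $r^*=(r^*)^2/r^*=\kappa_{2n-2}/\kappa_{2n-1}$.

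It then remains to identify the two minors. Deleting column $2n-1$ leaves the leading $2(n-1)$-block, so $\det(N'_{\widehat{2n-1}})=\nabla_{n-1}$ and $\kappa_{2n-1}=(-1)^{2n-1}\nabla_{n-1}=-\nabla_{n-1}$; deleting column $2n-2$ leaves columns $\{1,\dots,2n-3,2n-1\}$, which is exactly the matrix defining $\widetilde{\nabla}$, so $\kappa_{2n-2}=(-1)^{2n-2}\widetilde{\nabla}=\widetilde{\nabla}$. Hence $r^*=-\widetilde{\nabla}(\mu^*)/\nabla_{n-1}(\mu^*)$ and $\lambda(\mu^*),\overline{\lambda}(\mu^*)=r^*e^{\pm i\alpha\pi/2}=-\frac{\widetilde{\nabla}(\mu^*)}{\nabla_{n-1}(\mu^*)}\,e^{\pm i\alpha\pi/2}$, as claimed. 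The only delicate points I anticipate are bookkeeping the Cramer signs and column indices, and justifying that the top block has vanishing last column and rank exactly $2n-2$; both follow from the degree pattern of the shifted rows together with $\nabla_{n-1}(\mu^*)\neq0$, so I expect no genuine obstacle beyond this careful accounting.
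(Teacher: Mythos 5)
Your proposal is correct, and it reaches the theorem through the same reduction the paper uses: both arguments identify the critical roots as $r^*e^{\pm i\alpha\pi/2}$ with $r^*$ a common real root of $f_1$ and $f_2$, and both then express $r^*$ as $-\widetilde{\nabla}(\mu^*)/\nabla_{n-1}(\mu^*)$ via the minors of the resultant matrix $H_\alpha$. The difference lies in how that last identity is obtained. The paper simply cites the subresultant theorem to assert $\nabla_{n-1}(\mu^*)\cdot r_0+\widetilde{\nabla}(\mu^*)=0$ inside the proof of Lemma~\ref{le1} (the sentence ``This completes the proof of Theorem~\ref{th2}'' is all the paper offers), whereas you derive the identity from scratch: you observe that the rows of $H_\alpha$ are the shifted coefficient vectors of $f_1$ and $f_2$, so the power vector of $r^*$ lies in the kernel; you note that the top $2n-2$ rows have vanishing last column, restrict to a $(2n-2)\times(2n-1)$ matrix of full rank (guaranteed by $\nabla_{n-1}(\mu^*)\neq 0$), and read off $r^*$ as a ratio of the two maximal minors, which are exactly $\widetilde{\nabla}$ and $\nabla_{n-1}$ up to the Cramer signs. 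Your version buys a self-contained, elementary verification (and makes the sign conventions explicit, which the paper never checks), at the cost of some index bookkeeping; the paper's version is shorter but rests entirely on an external black-box result. Two small points you should make explicit: the argument needs $r^*\neq 0$ (so that the truncated power vector $w'$ is nonzero), which holds because a genuinely complex conjugate pair on $\Gamma$ cannot sit at the origin; and the hypothesis $\nabla_{n-1}(\mu^*)>0$ is inherited from the eigenvalue condition of Theorem~\ref{th1}, not from the bare statement of Theorem~\ref{th2} --- the paper is equally implicit about this.
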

Set
\begin{equation}
BS=\left \{ \mu\mid \nabla_n(\mu)=0,\nabla_p(\mu)>0,p=1,2,\cdots,n-1 \right \}\\
\end{equation}
called the bifurcation hyper-surface surface of system (\ref{eq1}).
\par
We denote Hessian matrix $\left (\frac{\partial^2\nabla_n(\mu) }{\partial \mu_i\partial \mu_j}\right)$ of $\nabla_n(\mu)$ at $\mu=\mu^*$ by $H\left (\nabla_n(\mu^*)  \right )$.
\begin{theorem}\label{th3}
(1). If $\mu^* \in BS$ and $\frac{\partial \nabla_n(\mu)}{\partial \mu}\Big|_{\mu=\mu^*}\neq 0$, then $\nabla_n(\mu)$ is indefinite in any neighborhood $\delta(\mu^*)$ of $\mu^*$.
\\
(2). If $\mu^* \in BS$, $\frac{\partial \nabla_n(\mu))}{\partial \mu}\Big|_{\mu=\mu^*}=0$, and $H\left (\nabla_n(\mu^*)  \right )$ is indefinite, then $\nabla_n(\mu)$ is indefinite in any neighborhood ${\delta}(\mu^*)$ of $\mu^*$.
\\
(3). If $\mu^* \in BS$, $\frac{\partial \nabla_n(\mu))}{\partial \mu}\Big|_{\mu=\mu^*}=0$, and $H\left (\nabla_n(\mu^*)  \right )$ is positive (or negative) definite, then $\nabla_n(\mu)$ is positive (or negative) definite in some neighborhood $\delta(\mu^*)$ of $\mu^*$.
\end{theorem}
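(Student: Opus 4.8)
The plan is to deduce all three cases from the Taylor expansion of the scalar map $\nabla_n(\mu)$ about $\mu^*$, using the defining property of $BS$ that $\mu^*\in BS$ forces $\nabla_n(\mu^*)=0$. First I would record that $\nabla_n(\mu)$ is $C^2$ (indeed analytic) in $\mu$: each entry $\overline{a}_j,\overline{b}_j$ of $H_\alpha(\mu)$ is the characteristic coefficient $a_j(\alpha,\mu)$ times a fixed trigonometric factor, and a leading principal minor is a polynomial in these entries, so $\nabla_n\in C^\infty$ whenever $g$ is smooth. This guarantees the first- and second-order expansions with Peano remainder used below.

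For part (1), since $\mu^*\in BS$ we have $\nabla_n(\mu^*)=0$, and by hypothesis $v:=\left.\frac{\partial \nabla_n}{\partial \mu}\right|_{\mu=\mu^*}\neq 0$. The first-order expansion along $v$ gives
\[
\nabla_n(\mu^*+t v)=t\,\lvert v\rvert^2+o(t)\qquad(t\to 0),
\]
which is positive for small $t>0$ and negative for small $t<0$; as $tv$ lies in any prescribed neighborhood $\delta(\mu^*)$, $\nabla_n$ attains both signs there and is therefore indefinite. For part (2) the gradient vanishes, so with $H:=H(\nabla_n(\mu^*))$,
\[
\nabla_n(\mu^*+t\xi)=\tfrac12 t^2\,\xi^{T}H\xi+o(t^2).
\]
Indefiniteness of $H$ supplies unit vectors $u,w$ with $u^{T}Hu>0$ and $w^{T}Hw<0$; along $\xi=u$ the quadratic term dominates and $\nabla_n>0$, while along $\xi=w$ one gets $\nabla_n<0$, so $\nabla_n$ is again indefinite in every neighborhood.

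For part (3), with the gradient zero and $H:=H(\nabla_n(\mu^*))$ positive definite, let $c>0$ be its smallest eigenvalue, so $\eta^{T}H\eta\geq c\lvert\eta\rvert^2$ for all increments $\eta$. The second-order expansion reads $\nabla_n(\mu^*+\eta)=\tfrac12\eta^{T}H\eta+R(\eta)$ with $R(\eta)=o(\lvert\eta\rvert^2)$; choosing $\delta(\mu^*)$ small enough that $\lvert R(\eta)\rvert<\tfrac{c}{4}\lvert\eta\rvert^2$ there yields $\nabla_n(\mu^*+\eta)\geq\tfrac{c}{4}\lvert\eta\rvert^2>0$ for $0<\lvert\eta\rvert$ inside $\delta(\mu^*)$, i.e. $\nabla_n$ is positive definite. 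The negative-definite case is identical after reversing signs, or by applying the argument to $-\nabla_n$.

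The genuinely delicate step is part (3): one must control the remainder \emph{uniformly} against the quadratic form rather than merely along isolated directions, which is precisely where definiteness enters through the eigenvalue bound $c$. Parts (1) and (2) only require exhibiting one or two favorable directions and are comparatively routine; the sole standing assumption throughout is the $C^2$-smoothness of $\nabla_n$, which holds automatically for the determinantal expression built from $H_\alpha(\mu)$ in (\ref{eq5}).
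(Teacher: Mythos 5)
Your proposal is correct and follows essentially the same route as the paper: the paper simply writes down the second-order Taylor expansion of $\nabla_n(\mu)$ about $\mu^*$ and declares all three cases obvious, while you supply the standard details (sign change along the gradient direction, directions of opposite curvature for an indefinite Hessian, and the uniform eigenvalue bound on the quadratic form to absorb the $o(\lVert\mu-\mu^*\rVert^2)$ remainder in the definite case). No discrepancy to report.
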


\section{Proofs of Theorems}
The original coordinate system and the coordinate system which counterclockwise turned angel $\theta =\frac{\left (\alpha -1  \right )\pi }{2}$ denote by $xy$ and ${x}'{y}'$, respectively. Since $f(\lambda;\alpha,\mu)$ is a real polynomial whose roots are symmetrical about the $x$-axis, when system (\ref{eq1}) occurs Hopf bifurcation, one nonzero root of $f(\lambda;\alpha,\mu)$ is on the $y'$-axis, and the others are in the left half plane of the coordinate system ${x}'{y}'$.
\par
\begin{lemma}\label{le1}
$f(\lambda;\alpha,\mu^*)$ has a pair of nonzero conjugate complex roots on the critical line $\Gamma$, and the other roots are in the stable region $\Omega$ if and only if
$\nabla_n(\mu^*)=0$, $\nabla_p(\mu^*)>0(p=1,2,\cdots,n-1),\widetilde{\nabla}(\mu^*)<0$.
\end{lemma}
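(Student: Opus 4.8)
The plan is to convert the statement about the fractional sectors $\Omega,\Sigma,\Gamma$ into a classical left/right half-plane problem for the rotated polynomial $g(r)=f(re^{i\theta};\alpha,\mu^*)$ with $\theta=\frac{(\alpha-1)\pi}{2}$, and then to read off the half-plane root count from the generalized Routh--Hurwitz minors of $H_\alpha(\mu^*)$.

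First I would record the geometry of the rotation $r=\lambda e^{-i\theta}$ by a direct computation of arguments. Three facts are needed: (i) the upper critical ray $\arg\lambda=\frac{\alpha\pi}{2}$ is mapped onto the positive imaginary $r$-axis, so a nonzero root $r_0e^{i\alpha\pi/2}$ of $f$ on $\Gamma$ becomes a root $ir_0$ of $g$ with $r_0>0$, whereas its conjugate $r_0e^{-i\alpha\pi/2}$ falls into the open left half $r$-plane; (ii) the entire stable sector $\Omega$ is carried into the open left half $r$-plane; and (iii) since $f$ is a real polynomial, its roots are symmetric about the real axis, and one checks that any root of $f$ in $\Sigma$ together with its conjugate contributes at least one root to the open right half $r$-plane. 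Fact (iii) is the crucial one: it is what lets ``no open right half-plane root of $g$'' be upgraded to ``no root of $f$ in $\Sigma$'', thereby discarding exactly those $\Sigma$-roots that the rotation would otherwise place in the left half-plane. Combining (i)--(iii) gives the key equivalence: $f(\lambda;\alpha,\mu^*)$ has a nonzero conjugate pair on $\Gamma$ with all other roots in $\Omega$ if and only if $g$ has exactly one root on the imaginary axis, lying on its positive part, together with $n-1$ roots in the open left half-plane and none in the open right half-plane.

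Next I would feed this count into the generalized Routh--Hurwitz criterion built from the real and imaginary parts $f_2,f_1$ of $g(ir)$ in \eqref{eq11}. That criterion characterises Hurwitz stability of $g$ (all $n$ roots in the open left half-plane) by $\nabla_p(\mu^*)>0$ for $p=1,\dots,n$; its boundary case $\nabla_n(\mu^*)=0$ with $\nabla_p(\mu^*)>0$ for $p\le n-1$ is precisely the situation where $g$ gains imaginary-axis roots while the remaining $n-1$ roots stay in the open left half-plane and none enter the right half-plane. This is equivalent to $f_1$ and $f_2$ acquiring a common real root $r_0$, since such an $r_0$ is the same datum as a root $ir_0$ of $g$. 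To pin the sign of that root I would eliminate between $f_1$ and $f_2$ and express the common root by Cramer's rule as $r_0=-\widetilde\nabla(\mu^*)/\nabla_{n-1}(\mu^*)$ (the identity recorded in Theorem \ref{th2}); since $\nabla_{n-1}(\mu^*)>0$, the condition $\widetilde\nabla(\mu^*)<0$ is exactly $r_0>0$, i.e. the imaginary-axis root lies on the positive part and is simple. Reading the chain in both directions delivers the stated equivalence, with $\widetilde\nabla<0$ simultaneously excluding the spurious case of a root of $g$ on the negative imaginary axis, which would correspond to a root of $f$ at argument $-\frac{(2-\alpha)\pi}{2}\in\Sigma$ rather than on $\Gamma$.

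I expect the main obstacle to be the root-counting content of the generalized Routh--Hurwitz criterion for the genuinely complex polynomial $g$: showing that the leading principal minors of the interlacing matrix $H_\alpha$ really count the left half-plane roots, and that $\nabla_n=0$ with positive lower minors isolates a single imaginary-axis root with the rest stable. I would establish this through a Hermite--Biehler type interlacing argument for the pair $f_1,f_2$ (or, equivalently, an Orlando-type factorisation of $\nabla_n$ in terms of the pairwise sums of the roots of $g$), paying attention to the fact that here both $f_1$ and $f_2$ may attain full degree $n$ --- the complex-coefficient feature that is absent from the classical real Routh--Hurwitz setting. The accompanying sign bookkeeping that separates the positive from the negative imaginary axis, already isolated above, is then the second point requiring care and is resolved by the $\widetilde\nabla<0$ condition.
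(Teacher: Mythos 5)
Your proposal is correct and follows essentially the same route as the paper: rotate by $\theta=\frac{(\alpha-1)\pi}{2}$, apply the generalized Routh--Hurwitz criterion to the pair $(f_1,f_2)$, and determine the sign of the common root via $r_0=-\widetilde{\nabla}(\mu^*)/\nabla_{n-1}(\mu^*)$ with $\nabla_{n-1}(\mu^*)>0$. The only difference is internal: where you would justify the boundary case of the criterion by a Hermite--Biehler/Orlando argument and extract $r_0$ by Cramer's rule, the paper deflates the common factor $(r-r_0)$, applies the criterion to the quotient pair $\widetilde{f}_1,\widetilde{f}_2$, relates the minors of $H_\alpha$ and $\widetilde{H}$ by column operations, and quotes the subresultant theorem for the same identity --- and your bookkeeping of which $\Sigma$-roots of $f$ land in the left half $r$-plane is spelled out more carefully than in the paper.
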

\begin{proof}
From the generalized Routh-Hurwitz criterion \cite{gantmakher2000theory}, if $f(\lambda;\alpha,\mu^*)$ has one root that is on the $y'$-axis(a part of the critical line $\Gamma$), $f_1(r;\alpha,\mu^*)$ and $f_2(r;\alpha,\mu^*)$ have one common real root. From the resultant theorem, we have $\nabla_n(\mu^*)=0,\nabla_{n-1}(\mu^*)\neq0$. Suppose that the common real root is $r_0$. Eq. (\ref{eq11}) can be rewritten as
\begin{equation}
\left\{\begin{matrix}
f_1(r;\alpha,\mu^*)=\widetilde{f}_1\cdot \left ( r-r_0\right )\\
f_2(r;\alpha,\mu^*)=\widetilde{f}_2\cdot \left ( r-r_0\right ),
\end{matrix}\right.
\end{equation}
where
\begin{equation}
\left\{\begin{matrix}
\widetilde{f}_1=\widetilde{a}_0 r^{n-1}+\widetilde{a}_1 r^{n-2}+\cdots+\widetilde{a}_{n-1}\\
\widetilde{f}_2=\widetilde{b}_0 r^{n-1}+\widetilde{b}_1 r^{n-2}+\cdots+\widetilde{b}_{n-1}.
\end{matrix}\right.
\end{equation}
Denote
\begin{small}
\begin{equation}
\widetilde{H}(\mu^*)=\left [ \begin{matrix}
\widetilde{a}_0  &\widetilde{a}_{1}&\widetilde{a}_{2}& \cdots  & 0  &\cdots &\cdots & 0\\
\widetilde{b}_0  &\widetilde{b}_{1}&\widetilde{b}_{2}& \cdots  & 0 &\cdots &\cdots & 0\\
0 & \widetilde{a}_0  &\widetilde{a}_{1}&\widetilde{a}_{2}&\cdots  & 0 & \cdots   & 0\\
0 & \widetilde{b}_0  &\widetilde{b}_{1}&\widetilde{b}_{2}& \cdots  & 0 & \cdots & 0\\
\vdots &\vdots &\vdots & \vdots &\vdots&\vdots &\vdots\\
0 &\cdots &\cdots & \widetilde{a}_0  &\widetilde{a}_{1}&\widetilde{a}_{2}& \cdots  &0\\
0 &\cdots &\cdots & \widetilde{b}_0  &\widetilde{b}_{1}&\widetilde{b}_{2}& \cdots  &0\\
\end{matrix} \right ],
\end{equation}
\end{small}
\\
the $2n\times 2n$ resultant matrix of $\widetilde{f}_1$ and $\widetilde{f}_2$, and $\widetilde{\nabla}_k(\mu^*)$ is the $2k$-th order leading principle minor of $\widetilde{H}(\mu^*)$.
Since the common roots of $\widetilde{f}_1$ and $\widetilde{f}_2$ are all in the stable region $\Omega$, from the generalized Routh-Hurwitz criterion, we have $\widetilde{\nabla}_k(\mu^*)>0(k=1,2,\cdots,n-1)$.
\par
Denote $T_i(M)$ the matrix obtained by multiplying the $(i-1)$th column of $M$ by $r_0$ and adding it to the $i$th column of $M$. We have
\begin{equation}
H_0=H_\alpha,H_1=T_1(H_0),H_2=T_2(H_1),\cdots,\widetilde{H}= T_n(H_{n-1}).
\end{equation}
Thus, $\nabla_p(\mu^*)=\widetilde{\nabla}_p(\mu^*)(p=1,2,\cdots,n-1)$, and $\nabla_p(\mu^*)(p=1,2,\cdots,n-1)>0,\nabla_n(\mu^*)=0$.
\par
From the subresultant theorem \cite{ABDELJAOUED2009588}, we have
\begin{equation}
\nabla_{n-1}(\mu^*)\cdot r_0+\widetilde{\nabla}(\mu^*)=0.
\end{equation}
Since the critical root is on the critical line $\Gamma$, which is on the left half $xy$-plane, $r_0>0$. From $\nabla_{n-1}(\mu^*)>0$, we know $\widetilde{\nabla}(\mu^*)<0$. This completes the proof of Theorem \ref{th2}.
\end{proof}
\begin{lemma}\label{le2}
If $f(\lambda;\alpha,\mu)$ has a pair of conjugate complex roots $\lambda(\mu),\overline{\lambda}(\mu)$, as $\mu$ changes, they cross the critical line $\Gamma$ at $\lambda(\mu^*),\overline{\lambda}(\mu^*)$ from the stable region $\Omega$ to the unstable region $\Sigma$, and the other roots are all in $\Omega$, then $\nabla_n(\mu)$ is indefinite in any neighborhood $\delta (\mu^*)$ of $\mu^*$, that is, $\forall \delta (\mu^*)$, $\exists\, \mu_1,\mu_2\in\delta (\mu^*)$, s.t. $\nabla_n(\mu_1)>0,\nabla_n(\mu_2)<0$.

\end{lemma}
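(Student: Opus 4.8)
The plan is to translate the crossing hypothesis into the rotated coordinate system $x'y'$ and then read off the sign of $\nabla_n$ on each side of the crossing from the resultant structure of $H_\alpha$. First I would record that $\nabla_n(\mu)$ is, by construction, the Sylvester resultant $\mathrm{Res}(f_1,f_2)$ of the imaginary and real parts in (\ref{eq11}); hence $\nabla_n(\mu)=0$ exactly when $g(ir)=f_2(r)+if_1(r)$ has a real root, i.e. when $f(\lambda;\alpha,\mu)$ has a root on the $y'$-axis. Writing $w_j=\lambda_j e^{-i\theta}$ for the images of the roots of $f$ under the rotation $\theta=(\alpha-1)\pi/2$, a standard factorisation of the resultant gives $\nabla_n(\mu)=c\prod_{j,k}\bigl(w_j+\overline{w}_k\bigr)$ with $c$ a fixed nonzero constant, whose diagonal part is $\prod_j 2\,\mathrm{Re}(w_j)$.

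On the stable side of the crossing every root of $f$ lies in $\Omega$, so $g$ is a Hurwitz polynomial in the $x'y'$-frame; by the generalized Routh--Hurwitz criterion all leading principal minors are positive, and in particular $\nabla_n(\mu)>0$. Thus every neighborhood $\delta(\mu^*)$ contains parameters at which $\nabla_n>0$. The substance of the lemma is to show that on the unstable side $\nabla_n(\mu)<0$, so that both signs occur arbitrarily close to $\mu^*$.

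For the unstable side I would isolate the unique factor of the product that vanishes at $\mu^*$. The conjugate pair $\lambda(\mu^*),\overline{\lambda}(\mu^*)$ sits on $\Gamma$, i.e. $\lambda(\mu^*)=\rho e^{i\alpha\pi/2}$ with $\rho>0$; under the rotation its image is $w=\rho e^{i\pi/2}=i\rho$, which lies on the $y'$-axis, whereas the image of the conjugate, $\rho e^{-i(2\alpha-1)\pi/2}$, has strictly negative real part for $\alpha\in(1,2)$ and therefore stays in the left half-plane. Hence exactly one factor, $w+\overline{w}=2\,\mathrm{Re}(w)$, of $\nabla_n$ vanishes at $\mu^*$, while the complementary product $R(\mu)=\nabla_n(\mu)/(2\,\mathrm{Re}\,w(\mu))$ is real and nonzero at $\mu^*$, hence of constant sign on a neighborhood. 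Comparing with the stable side, where $\mathrm{Re}(w)<0$ and $\nabla_n>0$ force $R<0$, I get $\nabla_n(\mu)=2\,\mathrm{Re}(w(\mu))\,R(\mu)<0$ as soon as the crossing root has entered $\Sigma$ (i.e. $\mathrm{Re}(w)>0$). Since by hypothesis the pair does cross from $\Omega$ into $\Sigma$ as $\mu$ varies, both a point with $\nabla_n>0$ and a point with $\nabla_n<0$ lie in every $\delta(\mu^*)$, which is the asserted indefiniteness.

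The main obstacle is the counting in the previous step: everything hinges on showing that \emph{only one} of the two crossing roots lands on the $y'$-axis, so that $\nabla_n$ has a single sign-changing factor and genuinely flips sign rather than merely touching zero. This is where the specific rotation angle and the range $\alpha\in(1,2)$ are essential --- for the conjugate image one must verify $\cos\bigl((2\alpha-1)\pi/2\bigr)<0$ --- and where one must also confirm that the complementary factor $R$ does not itself vanish at $\mu^*$ (equivalently, that no other root of $f$ is simultaneously on $\Gamma$ and that the resultant has a simple zero along the crossing path). Once the single-factor structure is secured, the sign flip and hence the indefiniteness follow from continuity alone, without needing the transversality derivative to be computed explicitly.
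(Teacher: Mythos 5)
Your argument is correct and reaches the conclusion by the same overall route as the paper --- positivity of $\nabla_n$ on the stable side of the crossing, negativity on the unstable side, then continuity --- but you actually prove the one step the paper merely asserts. The paper's proof observes that all leading principal minors of $H_\alpha$ are positive while every root stays in $\Omega$ (the generalized Routh--Hurwitz criterion applied to the rotated polynomial), and then simply writes $\nabla_n(\mu_2)<0$ for parameters on the unstable side, offering continuity as the only justification. Your Orlando/Hermite-type factorization $\nabla_n=c\prod_{j,k}\left(w_j+\overline{w}_k\right)$ supplies the missing reason: the off-diagonal factors pair into nonnegative quantities $\bigl|w_j+\overline{w}_k\bigr|^2$, the diagonal contributes $\prod_j 2\,\mathrm{Re}(w_j)$, and your check that only the image of $\lambda$ --- not that of $\overline{\lambda}$, whose rotated argument $-(2\alpha-1)\pi/2$ has negative cosine precisely for $\alpha\in(1,2)$ --- reaches the $y'$-axis shows that exactly one factor changes sign, so $\nabla_n$ genuinely flips rather than touching zero. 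The caveats you flag are discharged by the hypotheses: the remaining roots lie in the open region $\Omega$ and the crossing pair consists of two distinct roots, so the complementary factor satisfies $R(\mu^*)\neq 0$ (equivalently $\nabla_{n-1}(\mu^*)>0$, as in the setting of Lemma \ref{le1}). The only loose end is the unproved ``standard factorisation'' itself (its constant $c$ involves the leading coefficients $\overline{a}_0,\overline{b}_0$ and degenerates when $\sin(n\alpha\pi/2)=0$), but since you calibrate the sign of $R$ against the stable side, where the Routh--Hurwitz criterion independently gives $\nabla_n>0$, nothing in your argument depends on evaluating $c$. In short, your proof is a strictly more complete version of the paper's: the paper buys brevity by leaving the sign flip of $\nabla_n$ unjustified, while your product decomposition is what makes the lemma true.
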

\begin{proof}
Since the other roots of $f(\lambda;\alpha,\mu)$ are all in the stable region $\Omega$, from the proof of Lemma \ref{le1}, we have $\nabla_p(\mu^*)>0(p=1,2,\cdots,n-1)$. $\nabla_p(\mu)$ is the continuous function of $\mu$, so exists a small enough neighborhood $\delta(\mu^*)$ of $\mu^*$, such that $\forall \mu_1 \in \delta(\mu^*) \cap \Omega$,  $\mu_2 \in \delta(\mu^*) \cap \Sigma$, $\nabla_p(\mu_k)>0 (k=1,2), \nabla_n(\mu_1)>0, \nabla_n(\mu_2)<0, p=1,2,...,n-1$.
\par
Lemma \ref{le1} and Lemma \ref{le2} together give the proof of Theorem \ref{th1}.
\end{proof}
Since
$\nabla_n(\mu)=\nabla_n(\mu^*)+\frac{\partial \nabla_n(\mu)}{\partial \mu}\bigg|_{\mu=\mu^*}(\mu-\mu^*)+\frac{1}{2}(\mu-\mu^*)^T H(\nabla_n(\mu^*)) (\mu-\mu^*)+o( ||\mu-\mu^*||^2),
$
Theorem \ref{th3} is obvious.
\section{Illustrative Example}
To demonstrate the effectiveness of our explicit method in the multi-parameter situation, we use the same system analyzed by the online method \cite{YANG2022111714}. Consider the fractional-order neural network system with multiple parameter as follows:
\begin{equation}\label{eq17}
\left\{\begin{matrix}
\frac{d^{\alpha }x_{1}(t)}{dt^{\alpha }}=-\mu _{1}x_{1}(t)+k_{11}f_{11}(x_{1}(t))+k_{12}f_{12}(x_{2}(t))+k_{13}f_{13}(x_{3}(t)),\\ \frac{d^{\alpha }x_{2}(t)}{dt^{\alpha }}=-\mu _{1}x_{2}(t)+k_{21}f_{21}(x_{1}(t))+k_{22}f_{22}(x_{2}(t))+k_{23}f_{23}(x_{3}(t)),
\\ \frac{d^{\alpha }x_{3}(t)}{dt^{\alpha }}=-\mu _{2}x_{3}(t)+k_{31}f_{31}(x_{1}(t))+k_{32}f_{32}(x_{2}(t))+k_{33}f_{33}(x_{3}(t)),
\end{matrix}\right.
\end{equation}
where $\mu _{1},\mu_{2},k_{ij}(i,j=1,2,3)$ are bifurcation parameters, $\alpha =1.1$, $f_{ij}(x_{j})=tanh(x_{j}(t))$, the initial conditions are $x_j(0)=x_{j0}, {x}'_j(0)={x}'_{j0}$. The equilibrium point of system (\ref{eq17}) is $(0,0,0)$.
\par
The characteristic polynomial of the Jacobian matrix $J(\mu)$ is:
\begin{equation}
f(\lambda;\alpha,\mu) = \lambda ^{3}+a_1(\alpha,\mu)\lambda ^{2}+a_2\lambda(\alpha,\mu)+a_3(\alpha,\mu)=0,
\end{equation}
where
\begin{equation}
\begin{aligned}
a_1(\alpha,\mu)=&\mu_2-k_{33}+2\mu_1-k_{22}-k_{11},\\
a_2(\alpha,\mu)=&k_{11}k_{22}+k_{11}k_{33}-k_{11}\mu_1-k{11}\mu_2-k_{12}k_{21}-k_{13}k_{31}\\&+k_{22}k_{33}-k_{22}\mu_1-k_{22}\mu_2-k_{23}k_{32}-2k_{33}\mu_1+\mu_1^2+2\mu_1\mu_2,\\
a_3(\alpha,\mu)=&-k_{11}k_{22}k_{33}+k_{11}k_{22}\mu_2+k_{11}k_{23}k_{32}+k_{11}k_{33}\mu_1-k_{11}\mu_1\mu_2\\&+k_{12}k_{21}k_{33}-k_{12}k_{21}\mu_2-k_{12}k_{23}k_{31}-k_{13}k_{21}k_{32}+k_{13}k_{22}k_{31}\\&-k_{13}k_{31}\mu_1+k_{22}k_{33}\mu_1-k_{22}\mu_1\mu_2-k_{23}k_{32}\mu_1-k_{33}\mu_1^2+\mu_1^2\mu_2.
\end{aligned}
\end{equation}
From the definition in (\ref{eq5}) and Theorem \ref{th1}, we have the $H_{\alpha}(\mu)$ and $\widetilde{\nabla}(\mu)$ of system (\ref{eq17}).
Then
\begin{equation}\nonumber
\begin{aligned}
\nabla_1(\mu) &=a_1sin\left ( \frac{9\pi }{20} \right ), \\
\nabla_2(\mu) &=\left ( \frac{a_1^2a_2}{2}-\frac{a_3a_1}{2} \right )cos\left ( \frac{\pi }{10} \right )+\left ( \frac{a_2^2}{2}-\frac{a_1a_3}{2} \right )cos\left ( \frac{\pi }{5} \right )+\frac{a_1^2a_2}{2}-\frac{a_2^2}{2},\\
\end{aligned}
\end{equation}
\begin{equation}
\begin{aligned}
\nabla_3(\mu) &=\frac{a_3}{8}(2a_3^2sin\left ( \frac{\pi }{20} \right )+2a_1a_2a_3sin\left ( \frac{3\pi }{20} \right )+( 2a_1^3a_3+2a_1^2a_2^2-12a_1a_2a_3\\&+2a_2^3+6a_3^2 )sin\left ( \frac{7\pi }{20} \right )+ ( 6a_1^2a_2^2-4a_1^3a_3-2a_2a_1a_3-4a_2^3 )sin\left ( \frac{9\pi }{20} \right )\\&+\sqrt{2}a_1^3a_3-2\sqrt{2}a_1a_2a_3+\sqrt{2}a_2^3),\\
\widetilde{\nabla}(\mu) &=-\frac{1}{4}a_3\left ( 2a_1^2cos\left ( \frac{7\pi }{20} \right )+\left ( 2a_1^2-2a_2 \right )cos\left ( \frac{9\pi }{20} \right )+\sqrt{2}a_2 \right ).
\end{aligned}
\end{equation}
Based on Theorem \ref{th1}, the bifurcation hyper-surface $BS$ of system (\ref{eq17}) is determined by
\begin{figure}[h]
\begin{center}
\includegraphics[width=2.5in]{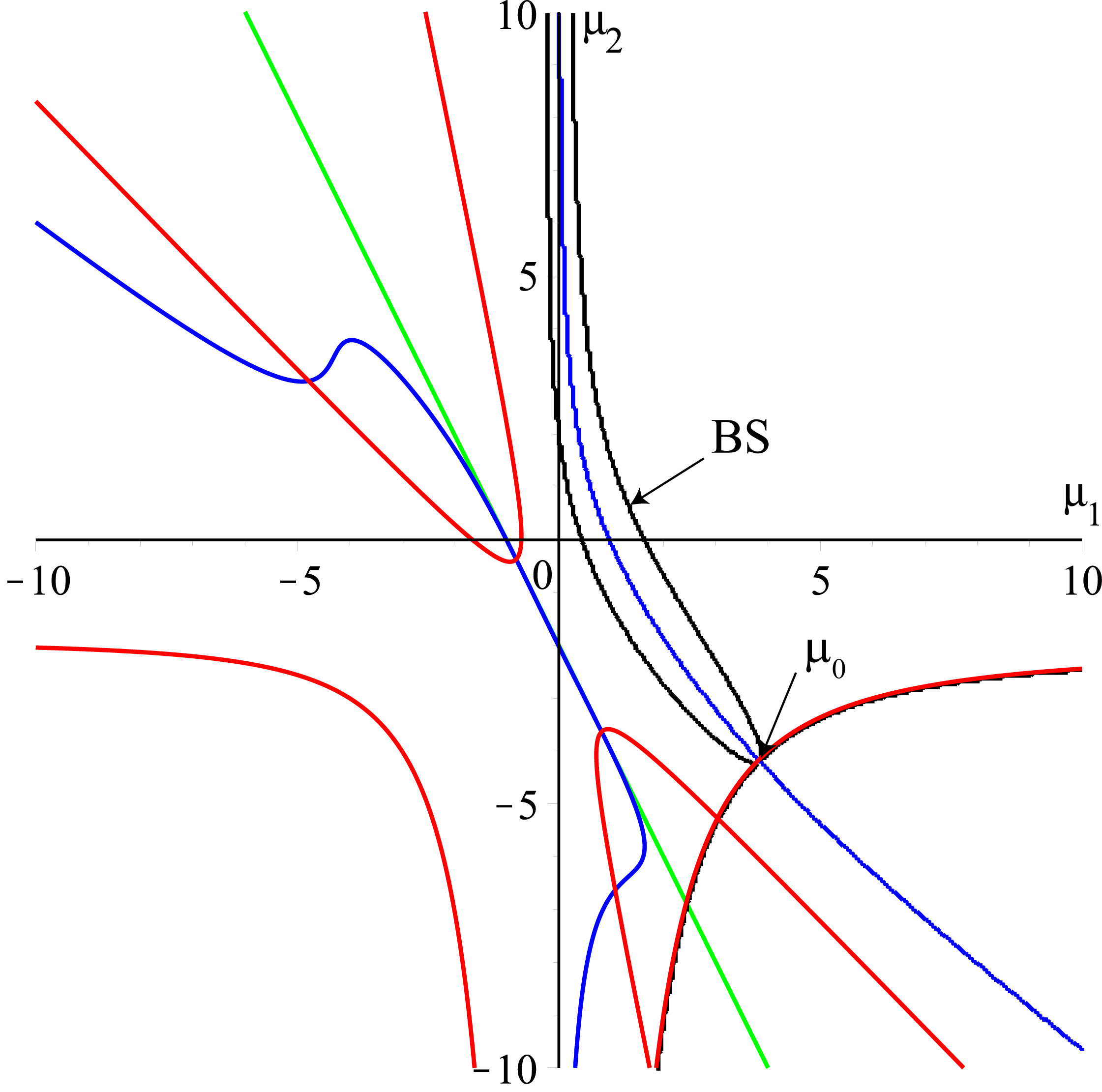}
\end{center}
\caption{The bifurcation hyper-surface in the $(\mu_1,\mu_2)$-plane for fixed $k_{ij}$}
\label{fig2}
\end{figure}
\begin{equation}\label{eq24}
\nabla_1(\mu^*)>0,\nabla_2(\mu^*)>0,\nabla_3(\mu^*)=0,\widetilde{\nabla}(\mu^*)<0.
\end{equation}
To make the results intuitive, we assume that $\mu_{1}$ and $\mu_2$ are bifurcation parameters, other parameters are fixed as $k_{11}=k_{12}=k_{13}=k_{23}=2,k_{21}=k_{22}=k_{31}=k_{33}=-2$, $k_{32}=1$. Fig. \ref{fig2} shows the bifurcation surface of system (\ref{eq17}) in this case.
\par
From Theorem \ref{th3}, we know that there exists the only one point $\mu_0\in BS$ which satisfies
\begin{center}
$\nabla_3(\mu)=0$, $\frac{\partial \nabla_n(\mu)}{\partial \mu}\Big|_{\mu=\mu_0}=0$, and $H\left (\nabla_n(\mu)  \right )\Big|_{\mu=\mu_0}$ is negative definite,
\end{center}
so the $\mu_0\approx (3.817533638,-4.170716050)$ does not meet the transversality condition.
\par
If the number of uncertain parameters are more three, it is difficult to show $BS$ with a figure. Our explicit expressions of results in Eq. (\ref{eq24}) are still valid for the situation of multi-parameter.

\section{Conclusions}
Explicit Hopf bifurcation conditions of fractional-order systems with order $1<\alpha <2$ are proposed in this paper. Basing the generalized Routh-Hurwitz criterion and subresultant theorem, we obtain some matrices to determine Hopf bifurcation hyper-surface directly. Our explicit method can avoid the failure of the existing methods in the face of cases of a large number of parameters.
\section*{Acknowledgment}
This work was partially supported by the National Natural Science Foundation of China
under Grants no.12171073.
\section*{References}
\bibliographystyle{elsarticle-num-names}
\biboptions{square,numbers,sort&compress}
\bibliography{elsarticle-template}

\end{document}